\title{Sections and referencing}
\newtheorem{thm}{Theorem}[section]
\newtheorem{lem}{Lemma}[section]
\newtheorem*{remark}{Remark}
\newtheoremstyle{break}
   {\topsep}{\topsep}%
   {\itshape}{}%
   {\bfseries}{}%
   {\newline}
   {\thmname{#1}\thmnumber{\@ifnotempty{#1}{ }\@upn{#2}}%
    \thmnote{ {\bfseries(#3)}}}%
\def\ps@pprintTitle{%
   \let\@oddhead\@empty
   \let\@evenhead\@empty
   \let\@oddfoot\@empty
   \let\@evenfoot\@oddfoot
}
\renewcommand\@biblabel[1]{}
\renewenvironment{thebibliography}[1]
     {\section*{\refname}%
      \@mkboth{\MakeUppercase\refname}{\MakeUppercase\refname}%
      \list{}%
           {\leftmargin0pt
            \@openbib@code
            \usecounter{enumiv}}%
      \sloppy
      \clubpenalty4000
      \@clubpenalty \clubpenalty
      \widowpenalty4000%
      \sfcode`\.\@m}
     {\def\@noitemerr
       {\@latex@warning{Empty `thebibliography' environment}}%
      \endlist}
\begin{document}
\begin{center}
{\bf \textsc{Upper and Lower Class Functions for Maximum Likelihood Estimator for Single server Queues
}}\\

\vskip 1cm
{ {\bf Sarat Kumar Acharya} and {\bf Saroja Kumar Singh}\\
(acharya\_sarat@yahoo.co.in, sarojasngh@gmail.com)}

 \vskip .5cm
   \small{P. G. Department of Statistics,
   Sambalpur University, Odisha, India}

\vskip .5cm
\end{center}

\begin{abstract}
Upper and  lower class functions for  the maximum  likelihood estimator
of the arrival and the service rates in a $GI/G/1$ queue are studied and
the results are verified for $M/M/1$ queue.
\end{abstract}

\vskip .5cm

\textbf{Key Words:} Single server queues,  $GI/G/1$ queue,  Exponential  families,
Maximum  likelihood estimator, Asymptotic inference, Asymptotic normality.

\vskip .5cm
{\bf{AMS 2010 subject classifications}} 60K25, 68M20, 62F12
\vskip .5cm

\section{Introduction}\label{sec:1}
Statistical inference is an integral part in any use of queueing models
in decision making. Not much works seems to have been done in this line
of research. The earliest work in this direction seems to be that of
Clarke (1957), who obtained the maximum likelihood estimators(MLE) of
the parameters in an M/M/1 queue in equilibrium. Cox (1965) and Wolff (1965)
carried the investigation with several ideas. The papers by Benes (1957)
and Goyal and Harris (1972) are also worth mentioning. A detail survey
of the earlier works in this direction is given in Bhat and Rao (1987).
Basawa and Prabhu (1988) have studied the asymptotic inference for single
server queues and have proved the consistency and asymptotic normality of
the maximum likelihood estimators. Acharya (1999) has studied the rate of
convergence of the distribution of the maximum likelihood estimators of
the arrival and the service rates from a single server queue.

Let $h(t)$ be a non-negative, non-decreasing function increasing to infinity.
We say that $h(t)$  belongs to the upper class or lower class of a
stochastic process $\{Y(t), 0\leq t \leq T\}$ according as
$$ P\{Y(t) > h(t) ~i.o. ~ \text{as} ~ t \rightarrow \infty \}= 0 ~ or ~ 1.$$

The purpose of this paper is to study the upper and lower class functions for
the difference between the maximum likelihood estimators and the true values
of the arrival and the service rates from a single server queue. In Theorem
\ref{theorem3.1} the integral test criteria for the upper and lower class
functions for the maximum likelihood estimators is developed.
Theorem \ref{theorem3.2} deals with the characterization problem for
the upper and lower class functions for the estimators.

\section{The Maximum Likelihood Estimator}\label{sec:2}
Consider a single server queueing system in which the
interarrival times $\{u_{k}, k\geq 1\}$ and the service times $\{v_{k}, k\geq 1\}$ are
two independent sequences of independent and identically distributed
nonnegative random variables with densities $f(u; \theta)$ and $g(v; \phi)$, respectively, where
$\theta$and $\phi$ are unknown parameters. Let us assume that $f$
and $g$ belong to the continuous exponential families given by
\begin{equation}\label{eqn:1}
  f(u; \theta)= a_{1}(u) e^{[\theta h_{1}(u)- k_{1}(\theta)]},
\end{equation}

\begin{equation}\label{eqn:2}
  g(v; \phi) = a_{2}(v) e^{[\phi h_{2}(v)- k_{2}(\phi)]}.
\end{equation}
It is further assumed that the densities in \eqref{eqn:1} and in \eqref{eqn:2} are
equal to zero on $(-\infty, 0)$.

For simplicity we assume that the initial customer arrives at
time $t=0$. Our sampling scheme is to observe the system over a
continuous time interval $[0, T]$ where  $T$ is a suitable
stopping time. The sample data consist of
\begin{equation}\label{eqn:3}
\{A(T), D(T), u_{1}, u_{2}, u_{3},\cdots \cdots, u_{A(T)},  v_{1},
v_{2},\cdots \cdots, v_{D(T)} \},
\end{equation}
where $A(T)$ is the number of arrivals and $D(T)$ is the number
of departures during $(0, T]$. Obviously no arrivals occur
during $[\sum_{i=1}^{A(T)} u_{i}, T]$ and no departures during
$[\gamma(T)+\sum_{i=1}^{D(T)}v_{i}, T]$, where $\gamma(T)$ is
the total idle period in $(0,T]$.

Some possible stopping rules to determine $T$ are given below:

Rule 1.  Observe the system until a fixed time $t$. Here $T=t$
with probability one and $A(T)$ and $D(T)$ are both random
variables.\\

Rule 2.  Observe the system until $d$ departures have occurred so
that $D(T)=d$. Here $T= \gamma(T)+ v_{1}+v_{2}+\cdots+v_{d}$ and
$A(T)$ are random variables.\\

Rule 3.  Observe the system until $m$ arrivals take place so
that $A(T)=m$. Here $T=u_{1}+u_{2}+u_{3}+\cdots+u_{m}$ and
$D(T)$ are random variables.\\

Rule 4.  Stop at the $nth$ transition epoch. Here, $T, A(T)$ and
$D(T)$ are all random variables and $A(T)+D(T)=n$.\\

Under rule 4, we stop either with an arrival or in a departure.
If we stop with an arrival, then $\sum_{i=1}^{A(T)} u_{i}=T$ and
no departures during $[\gamma(T)+\sum_{i=1}^{D(T)}v_{i}, T]$.
Similarly, if we stop in a departure, then
$\gamma(T)+\sum_{i=1}^{D(T)}v_{i}=T$ and there are no arrivals
during $[\sum_{i=1}^{A(T)} u_{i}, T]$.

The likelihood function based on data \eqref{eqn:3} is given by
\begin{align}\label{eqn:4}
  L_{T}(\theta, \phi) & = \prod_{i=1}^{A(T)} f(u_{i},\theta)\prod_{i=1}^{D(T)} f(v_{i},\phi) \nonumber \\
  & \times\Biggr[1-F_{\theta}[T-\sum_{i=1}^{A(T)}
  u_{i}]\Biggr]\Biggr[1-G_{\phi}[T-\gamma(T)-\sum_{i=1}^{D(T)}v_{i}]\Biggr],
\end{align}
where $F$ and $G$ are distribution functions corresponding to the
densities $f$ and $g$ respectively. The likelihood function
$L_{T}(\theta,\phi)$ remains valid under all the stopping rules.

The approximate likelihood $L_{T}^{a}(\theta,\phi)$ is defined as
\begin{equation}\label{eqn:5}
L_{T}^{a}(\theta,\phi)= \prod_{i=1}^{A(T)}
f(u_{i},\theta)\prod_{i=1}^{D(T)} f(v_{i},\phi), \quad  \quad \text{(cf. Basawa and Prabhu (1988))}.
\end{equation}
Under certain conditions the maximum likelihood estimates obtained from
\eqref{eqn:5} are asymptotically equivalent to those obtained from \eqref{eqn:4}(cf. Basawa and Prabhu (1988)).

We assume that the following condition holds: \\
\textbf{Condition C1:}
Suppose that there exists a positive function $\varepsilon(T)\downarrow 0$ such that
$T \varepsilon^2(T) \rightarrow \infty$ as $T \rightarrow \infty$, and
$$ P \bigg\{ \bigg|\frac{A(T)}{E(A(T))} - 1\bigg| \geq \varepsilon(T) \bigg\}= O(\varepsilon^{\frac{1}{2}}(T))$$
and
$$ P \bigg\{ \bigg|\frac{D(T)}{E(D(T))} - 1 \bigg| \geq \varepsilon(T)  \bigg\}= O(\varepsilon^{\frac{1}{2}}(T))$$
Basawa and Prabhu (1988) have shown that the maximum likelihood estimator of $\theta$ and $\phi$ are given by
\begin{eqnarray}\label{eqn:6}
\hat{\theta}_T=
\eta_{1}^{-1} \bigg[(A(T))^{-1}\sum_{i=1}^{A(T)}h_{1}(u_{i}) \bigg], \\
\hat{\phi}_T= \eta_{2}^{-1}\bigg[(D(T))^{-1}\sum_{i=1}^{D(T)}h_{2}(v_{i}) \bigg]
\end{eqnarray}
where $\eta_i^{-1}(.)$ denotes the inverse functions of $\eta_i(.)$ for $i=1,2$ and
\begin{align}
  \eta_1(\theta)=E(h_1(u)) &= k_1^{'}(\theta) \\
  \text{and}  \nonumber \\
  \eta_2(\phi)=E(h_2(v)) &= k_2^{'}(\phi)
\end{align}
The Fisher information matrix is given by
\begin{equation}
I(\theta, \phi) = \left[
        \begin{array}{cc}
          \sigma_1^2E(A(T)) & 0  \\
          0 &  \sigma_2^2E(D(T)) \\
        \end{array}
      \right]
      = \left[
        \begin{array}{cc}
          I(\theta) & 0  \\
          0 &  I(\phi) \\
        \end{array}
      \right],
\end{equation}
where $\sigma_1^2=\sigma_1^2(\theta)=var_{\theta}(h_1(u))$ and $\sigma_2^2=\sigma_2^2(\theta)=var_{\phi}(h_2(v))$.

Under suitable stability conditions on stopping times, Basawa and Prabhu (1988) have proved that
\begin{equation}\label{eqn:11}
  \hat{\theta}_T \rightarrow \theta_0 \quad \text{and} \quad \hat{\phi}_T \rightarrow \phi_0 \quad \text{as} \quad T \rightarrow \infty
\end{equation}
and
\begin{equation}
I^{1/2}(\theta_0, \phi_0)
\left[
        \begin{array}{c}
          \hat{\theta}_T - \theta_0  \\
           \hat{\phi}_T - \phi_0\\
        \end{array}
      \right]
      \Rightarrow N \left[
      \left(
        \begin{array}{c}
          0  \\
          0  \\
        \end{array}
        \right),
        \left(
        \begin{array}{cc}
          1 & 0  \\
          0 &  1 \\
        \end{array}
      \right)
      \right],
\end{equation}
where $\theta_0$ and $\phi_0$ denote the true value of $\theta$ and
$\phi$ respectively, and the symbol $\Rightarrow$ denotes the convergence in distribution.

The likelihood function in \eqref{eqn:5} becomes
\begin{equation}\label{eqn:13}
  L^a_T(\theta, \phi)= \prod \limits_{i=1}^{A(T)}a_1(u_i) \prod \limits_{i=1}^{D(T)}a_2(v_i)
exp\bigg \{ \sum\limits_{i=1}^{A(T)}[\theta h_1(u_i) - k_1(\theta)] + \sum\limits_{i=1}^{D(T)}[\phi h_2(v_i) - k_2(\phi)] \bigg\}
\end{equation}
and the log likelihood function is
\begin{equation}\label{eqn:14}
  l(\theta, \phi)= log \bigg[ \prod \limits_{i=1}^{A(T)}a_1(u_i) \prod \limits_{i=1}^{D(T)}a_2(v_i) \bigg] +
 \sum\limits_{i=1}^{A(T)}[\theta h_1(u_i) - k_1(\theta)] + \sum\limits_{i=1}^{D(T)}[\phi h_2(v_i) - k_2(\phi)]
\end{equation}
Let
$$
l^{'}(\theta_0)=\frac{\partial}{\partial \theta} l(\theta, \phi)\bigg|_{\theta=\theta_0}, \quad
l^{''}(\theta_0)=\frac{\partial^2}{\partial \theta^2} l(\theta, \phi)\bigg|_{\theta=\theta_0}
$$
Similarly
$l^{'}(\hat{\theta}_T)$, $l^{'}(\hat{\phi}_T)$, $l^{'}(\phi_0)$, $l^{''}(\phi_0)$, $l^{''}(\hat{\theta}_T)$ and $l^{''}(\hat{\phi}_T)$
are defined.

Now by Taylor's formula
\begin{equation}\label{eqn:15}
  l^{'}(\theta_0)=l^{'}(\hat{\theta}_T) + (\theta_0-\hat{\theta}_T) \{ l^{''}(\theta_0)
  +[l^{''}(\bar{\theta}_T)-l^{''}(\theta_0)]\}
\end{equation}
and
\begin{equation}\label{eqn:16}
  l^{'}(\phi_0)=l^{'}(\hat{\phi}_T) + (\phi_0-\hat{\phi}_T) \{ l^{''}(\phi_0)
  +[l^{''}(\bar{\phi}_T)-l^{''}(\phi_0)]\}
\end{equation}
where $|\theta_0-\bar{\theta}_T| \leq |\theta_0-\hat{\theta}_T| $ and
$|\phi_0-\bar{\phi}_T| \leq |\phi_0-\hat{\phi}_T|$.

Since $\hat{\theta}_T$ and $\hat{\phi}_T$ are the MLEs of $\theta$ and $\phi$ respectively, we get
\begin{equation}\label{eqn:17}
  \hat{\theta}_T-\theta_0=\frac{l^{'}(\theta_0)}{[l^{''}(\theta_0) - l^{''}(\bar{\theta}_T)]-l^{''}(\theta_0)}
\end{equation}
and
\begin{equation}\label{eqn:18}
  \hat{\phi}_T-\phi_0=\frac{l^{'}(\phi_0)}{[l^{''}(\phi_0) - l^{''}(\bar{\phi}_T)]-l^{''}(\phi_0)}
\end{equation}

\textbf{Condition C2:}
There exists a positive function $\varepsilon(.)$ such that
$$
\int\limits_{0}^{T}t^{-1}(loglogt)(\varepsilon^{1/2}(t))dt < \infty.
$$
Under condition (C1) Acharya (1999) has shown that
\begin{equation}\label{eqn:19}
  \sup_x\bigg| P\{I^{1/2}(\hat{\theta}_T- \theta_0)\leq x \} -\Phi(x)\bigg|=O(\varepsilon^{1/2}(T))
\end{equation}
and
\begin{equation}\label{eqn:20}
  \sup_x\bigg| P\{I^{1/2}(\hat{\phi}_T- \phi_0)\leq x \} -\Phi(x)\bigg|=O(\varepsilon^{1/2}(T))
\end{equation}

In section \ref{sec:3} we will state and prove results only for the arrival process and write $I$ for $I(\theta_0)$.
The corresponding results for the departure case are similar.
\section{Upper class and lower class functions for the MLE:}\label{sec:3}
\begin{lem}\label{lemma3.1}
Let  $h(t)$ be a monotonically increasing function of $t$. Then for a sequence $\{ t_n \} \uparrow \infty $  as $n\rightarrow \infty$,
$$
\sum \limits_{n=1}^{\infty}(loglogt_n) t_n^{-1}P\{I^{1/2}(\hat{\theta_{t_n}} - \theta_0) > h(t_n) \}
$$
and
$$
\sum\limits_{n=1}^{\infty}(loglogt_n) (t_n h(t_n))^{-1} exp (-h^2(t_n)/2)
$$
converge or diverge simultaneously.
\end{lem}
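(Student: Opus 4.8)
The plan is to reduce the tail probability $P\{I^{1/2}(\hat{\theta}_{t_n}-\theta_0) > h(t_n)\}$ to a standard normal tail via the Berry--Esseen type estimate \eqref{eqn:19}, and then to replace that normal tail by its Mills ratio asymptotics. Setting $T=t_n$ in \eqref{eqn:19} yields
\begin{equation*}
P\{I^{1/2}(\hat{\theta}_{t_n}-\theta_0) > h(t_n)\} = \bigl(1-\Phi(h(t_n))\bigr) + R_n, \qquad |R_n| \le C\,\varepsilon^{1/2}(t_n),
\end{equation*}
for some constant $C$ and all large $n$. Since $h(t)\uparrow\infty$ we have $h(t_n)\to\infty$, so the classical estimate $1-\Phi(x)=\frac{1}{x\sqrt{2\pi}}e^{-x^2/2}(1+o(1))$ applies with $x=h(t_n)$. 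Substituting, the first series splits into a main part
\begin{equation*}
\frac{1}{\sqrt{2\pi}}\sum_{n}(\log\log t_n)\,(t_n h(t_n))^{-1}\,e^{-h^2(t_n)/2}\,(1+o(1))
\end{equation*}
together with an error part $\sum_n (\log\log t_n)\,t_n^{-1}R_n$.

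First I would dispose of the main part. Since every summand is nonnegative and the factor $(1+o(1))$ is eventually trapped between two positive constants, the main part converges if and only if the second series in the statement converges; the constant $1/\sqrt{2\pi}$ and the bounded multiplier are immaterial to convergence. This already identifies the second series as the correct comparison object.

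Next I would show the error part converges absolutely, so it cannot disturb the dichotomy. By the bound on $R_n$ this reduces to $\sum_n (\log\log t_n)\,t_n^{-1}\varepsilon^{1/2}(t_n)<\infty$, which is precisely the discretized form of the integral in Condition C2. The integrand $f(t)=t^{-1}(\log\log t)\,\varepsilon^{1/2}(t)$ is eventually decreasing, because $t^{-1}$ and $\varepsilon^{1/2}(t)$ decrease while $\log\log t$ grows only slowly; hence along a sufficiently separated sequence the monotone comparison $f(t_n)(t_n-t_{n-1})\le\int_{t_{n-1}}^{t_n}f(t)\,dt$ gives $\sum_n f(t_n)\le\int_0^\infty f(t)\,dt<\infty$ by Condition C2. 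Combining the two parts, and using that all terms of the first series are nonnegative (so no cancellation occurs), the first series converges if and only if its main part does, if and only if the second series converges.

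I expect the sum--integral comparison in the error step to be the main obstacle, since it is the only place where the spacing of $\{t_n\}$ enters: Condition C2 controls the \emph{integral}, and passing to the sum along $\{t_n\}$ is legitimate only when the points are separated enough for the monotone bound above to hold (for instance when $t_{n+1}-t_n$ is bounded below, or when $t_n$ grows geometrically, as in the sequences to which this lemma is subsequently applied). The remaining ingredients---the Berry--Esseen substitution and the Mills ratio expansion---are routine once $h(t_n)\to\infty$ is noted.
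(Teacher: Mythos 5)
Your proposal is correct and follows essentially the same route as the paper: apply the Berry--Esseen bound \eqref{eqn:19} at $T=t_n$ to replace the tail probability by $1-\Phi(h(t_n))$ plus an error, control the accumulated error $\sum_n(\log\log t_n)\,t_n^{-1}\,\varepsilon^{1/2}(t_n)$ via Condition C2, and replace the normal tail by its Mills-ratio equivalent $C\,h^{-1}(t_n)\,e^{-h^2(t_n)/2}$. If anything, you are more explicit than the paper about the one delicate point---passing from the integral in C2 to the series along $\{t_n\}$, which the paper dismisses with ``(using C2)''---so your flagged spacing caveat is a refinement of, not a deviation from, the published argument.
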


\begin{lem}\label{lemma3.2}
Let $h(t_n)$ be a positive monotonically increasing function which increases to infinity. Then
$$
\sum\limits_{n=1}^{\infty}(t_n h(t_n))^{-1} exp(-h^2(t_n)/2)
$$
and
$$
\sum\limits_{n=1}^{\infty}(loglogt_n) (t_n h(t_n))^{-1} exp \{(-h^2(t_n)/2) (1+C/loglogt_n)\}
$$
converge or diverge simultaneously.
\end{lem}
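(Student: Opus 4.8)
The plan is to prove the two series converge or diverge together by a term-by-term comparison, so I would begin by forming the ratio of their general terms. Writing $a_n=(t_n h(t_n))^{-1}\exp(-h^2(t_n)/2)$ and $b_n=(\log\log t_n)(t_n h(t_n))^{-1}\exp\{-(h^2(t_n)/2)(1+C/\log\log t_n)\}$ for the summands of the first and second series, the common factor $(t_n h(t_n))^{-1}\exp(-h^2(t_n)/2)$ cancels, leaving
$$
\frac{b_n}{a_n}=(\log\log t_n)\exp\!\left(-\frac{C\,h^2(t_n)}{2\log\log t_n}\right).
$$
By the comparison test it suffices to control this ratio, so the whole argument reduces to understanding the single quantity $\rho_n:=\log\log\log t_n-\dfrac{C\,h^2(t_n)}{2\log\log t_n}$, since $b_n/a_n=e^{\rho_n}$.

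I would then split the analysis according to the size of $h(t_n)$ measured against the iterated-logarithm scale, i.e.\ according to $x_n:=h^2(t_n)/(2\log\log t_n)$. The point is that $e^{\rho_n}$ is bounded between two positive constants precisely on the band where $C x_n\asymp\log\log\log t_n$, and one expects this band to be exactly the range in which the first series changes from convergence to divergence. On that band a direct two-sided comparison gives $c_1 a_n\le b_n\le c_2 a_n$ for constants $0<c_1\le c_2<\infty$ and all large $n$, and the comparison test closes the case. For $h(t_n)$ much larger than this band one has $\rho_n\to-\infty$, so $b_n=o(a_n)$, and it is then enough to check that the first series already converges there; for $h(t_n)$ much smaller one has $\rho_n\to+\infty$, so $a_n=o(b_n)$, and it is enough to check that the first series already diverges there. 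In both extreme regimes the monotonicity of $h$ and a tail comparison of $\sum (t_n h(t_n))^{-1}\exp(-h^2(t_n)/2)$ against a $\log\log$-type reference series decide convergence outright, so the implication always runs in the required direction.

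The step I expect to be the main obstacle is the near-critical band: showing that the multiplicative correction $(\log\log t_n)\exp(-Ch^2(t_n)/(2\log\log t_n))$ neither accumulates nor vanishes fast enough to move the convergence threshold. Concretely, one must verify that along the sequence where $a_n$ is borderline summable the perturbation $C/\log\log t_n$ in the exponent exactly trades against the prefactor $\log\log t_n$, that is, $\rho_n=O(1)$ there, so that the band and the threshold coincide. This is a genuinely delicate cancellation between a logarithmically growing prefactor and an exponentially small factor, and it is the only place where the precise interplay between $h$, the constant $C$, and the growth of $t_n$ enters; everything else is routine comparison. I would handle it by substituting the borderline form of $h$ (the one for which $\sum a_n$ marginally diverges) into $\rho_n$ and verifying the two one-sided bounds $c_1\le e^{\rho_n}\le c_2$ by monotonicity, treating the convergence and divergence cases symmetrically, in the spirit of the upper- and lower-class dichotomy underlying Theorem~\ref{theorem3.1}.
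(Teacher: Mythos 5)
Your opening reduction is fine: with your notation, $b_n/a_n=(\log\log t_n)\exp\{-Ch^2(t_n)/(2\log\log t_n)\}$. The fatal step is the claim that the band on which this ratio stays between two positive constants, namely $h^2(t_n)\asymp \tfrac{2}{C}(\log\log t_n)(\log\log\log t_n)$, is ``exactly the range in which the first series changes from convergence to divergence.'' It is not. For a regularly spaced sequence such as $t_n=n$, the first series changes behaviour when $h^2(t_n)$ is near $2\log\log t_n$ (with only additive $\log\log\log t_n$ corrections), which sits below your band by the unbounded factor $\log\log\log t_n$; on that true threshold your ratio behaves like $e^{-C}\log\log t_n\to\infty$. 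So the intermediate regime $2\log\log t_n\le h^2(t_n)\ll(\log\log t_n)(\log\log\log t_n)$ is covered by none of your three cases: there $\sum a_n$ converges while the ratio is unbounded, and comparison gives nothing. This gap cannot be patched, because the statement as printed is in fact false. Take $t_n=n$ and $h^2(t)=2\log\log t+2\log\log\log t$. Then $a_n\sim \frac{1}{\sqrt{2}\,n\,\log n\,(\log\log n)^{3/2}}$, so $\sum a_n<\infty$, while the exponent correction contributes only the bounded factor $e^{-C(1+o(1))}$, so $b_n\sim \frac{e^{-C}}{\sqrt{2}\,n\,\log n\,(\log\log n)^{1/2}}$ and $\sum b_n=\infty$. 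This defeats the lemma as stated for every fixed constant $C$: near the threshold the exponential correction is $\asymp 1$ but the prefactor $\log\log t_n$ is not.

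For comparison: the paper gives no proof of this lemma at all; it simply refers to Davis (1969). The statement Davis proves --- and the form in which Lemma \ref{lemma3.2} is actually invoked in the paper's proof of Theorem \ref{theorem3.2} --- has first series $\sum_n \frac{h(t_n)}{t_n}\exp(-h^2(t_n)/2)$, i.e.\ prefactor $h(t_n)/t_n$ rather than $(t_n h(t_n))^{-1}$; the printed statement carries a typo (compare Theorem \ref{theorem3.1}, whose integral test also has $h(T)/T$). For that corrected statement your regime-splitting strategy does work, but with the band in the right place: the term ratio becomes $\frac{\log\log t_n}{h^2(t_n)}\exp\{-Ch^2(t_n)/(2\log\log t_n)\}$, which is bounded above and below by positive constants precisely when $h(t_n)\asymp(\log\log t_n)^{1/2}$, since then both the prefactor ratio and the exponential correction are $\asymp 1$. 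One then reduces to this band by the Feller-type truncation $\hat h=\min\{\max\{h,(\log\log)^{1/2}\},\,2(\log\log)^{1/2}\}$ (the same Lemma 2.3 of Jain, Jogdeo and Stout that the paper uses in proving Theorem \ref{theorem3.1}), using monotonicity of $h$ to show that above the band both series converge and below it both diverge, at least for regularly spaced $t_n$ as in Davis's setting $t_n=n$; for completely arbitrary sparse sequences $t_n$ even the corrected equivalence can fail, another point the paper glosses over.
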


\begin{thm}\label{theorem3.1}
Let $h(t)$ be a positive monotonically increasing function which increases to infinity. Then
$$ P\{I^{1/2}(\hat{\theta}_T - \theta_0) > h(T) ~i.o. ~ as ~ T \rightarrow \infty \}= 0 ~ or ~ 1,$$
according as
$$
I(h(T))=\int\limits_{1}^{\infty}(h(T)/T) exp\{ -h^2(T)/2 \} dT
$$
converges or diverges.
\end{thm}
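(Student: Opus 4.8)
The plan is to prove this integral test (an Erdős–Feller–Kolmogorov–Petrowsky-type law of the iterated logarithm criterion) by converting the continuous-time "infinitely often" statement into a discrete series via a carefully chosen subsequence $\{t_n\}$, applying the two lemmas that have already been established, and then invoking the Borel–Cantelli lemmas. The key link is that Lemma~\ref{lemma3.1} tells us that the probability series $\sum_n (\log\log t_n)\,t_n^{-1}\,P\{I^{1/2}(\hat\theta_{t_n}-\theta_0)>h(t_n)\}$ converges or diverges together with the explicit Gaussian-tail series $\sum_n (\log\log t_n)(t_n h(t_n))^{-1}\exp(-h^2(t_n)/2)$, while Lemma~\ref{lemma3.2} shows the latter is comparable to $\sum_n (t_n h(t_n))^{-1}\exp(-h^2(t_n)/2)$. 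So the whole problem reduces to comparing this last discrete sum against the continuous integral $I(h(T))$ appearing in the statement.

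**First I would** fix the subsequence. Because $h$ is positive, monotone increasing to infinity, the natural choice is a geometric-type sequence, $t_n = \rho^n$ for some $\rho>1$ (or $t_n$ defined implicitly through $h$), chosen so that the summand $(t_n h(t_n))^{-1}\exp(-h^2(t_n)/2)$ behaves like the increment of the integrand over $[t_n,t_{n+1}]$. The monotonicity of $h$ (hence of $\exp(-h^2/2)$, decreasing) lets me sandwich the integral $\int_{t_n}^{t_{n+1}} (h(T)/T)\exp(-h^2(T)/2)\,dT$ between constant multiples of the discrete terms. The factor $\log\log t_n$ is roughly constant across a geometric block and cancels in the two-sided comparison of the lemmas, so it does not obstruct the integral test; what matters is matching $T^{-1}\,dT$ with $t_n^{-1}(t_{n+1}-t_n)\asymp\log\rho$ and the $h(T)$ versus $h(t_n)$ factors. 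This establishes that $\sum_n (t_n h(t_n))^{-1}\exp(-h^2(t_n)/2)$ and $I(h(T))$ converge or diverge simultaneously, \emph{independently of the particular} $\rho$.

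**The convergent half** then follows directly: if $I(h(T))<\infty$, the probability series converges by the lemma chain, and the first Borel–Cantelli lemma gives $P\{I^{1/2}(\hat\theta_T-\theta_0)>h(T)\ \text{i.o.}\}=0$, provided I first argue that the event along the continuum $T\to\infty$ is controlled by the events along $\{t_n\}$ — using monotonicity of $h$ and the error bound \eqref{eqn:19} to pass from the discretized suprema back to arbitrary $T$. For **the divergent half** I would use the second Borel–Cantelli lemma, which requires (near-)independence of the events $\{I^{1/2}(\hat\theta_{t_n}-\theta_0)>h(t_n)\}$. Here $\hat\theta_{t_n}-\theta_0$ is essentially a normalized partial sum $(A(t_n))^{-1/2}\sum h_1(u_i)$ (via \eqref{eqn:17} and the asymptotics \eqref{eqn:11}), so the increments over disjoint blocks $(t_n,t_{n+1}]$ are asymptotically independent; I would apply the Gaussian approximation \eqref{eqn:19} to replace $P\{\cdot\}$ by the normal tail with error $O(\varepsilon^{1/2})$, whose summability is guaranteed by Condition~C2.

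**The main obstacle** I anticipate is the divergent direction: establishing genuine asymptotic independence of the exceedance events across the subsequence, since $\hat\theta_{t_n}$ is built from overlapping samples and $A(T)$ is itself random. The standard remedy — replacing the dependent events by independent block-increments and bounding the overlap via a Kolmogorov-type maximal inequality, then controlling the cumulative Gaussian-approximation error through the integrability in Condition~C2 — is what makes the $\log\log$ weighting and the $\varepsilon^{1/2}$ rate in \eqref{eqn:19} indispensable. Once the independence is secured, the second Borel–Cantelli lemma combined with the divergence of the discrete series (equivalently of $I(h(T))$) yields $P\{\cdots\ \text{i.o.}\}=1$, completing the dichotomy.
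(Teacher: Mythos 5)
There is a genuine gap here: your route through Lemmas \ref{lemma3.1}--\ref{lemma3.2} plus Borel--Cantelli cannot be made to work, for concrete reasons. First, the series--integral equivalence fails for a geometric subsequence $t_n=\rho^n$: every term of the weighted series in the lemmas is bounded by $(\log\log \rho^n)\rho^{-n}$, so those series converge for \emph{every} $h$, while $I(h(T))$ diverges, e.g., for $h(T)=(\log\log T)^{1/2}$; no information can flow. Indeed, the block increment $\int_{t_n}^{t_{n+1}}(h(T)/T)e^{-h^2(T)/2}dT$ is of order $h(t_n)e^{-h^2(t_n)/2}\log\rho$, with prefactor $h(t_n)$ rather than $(t_nh(t_n))^{-1}$; the lemma series can only match the integral when the spacing satisfies $t_{n+1}-t_n\asymp 1$ (so that $t_n^{-1}\asymp\int_{t_n}^{t_{n+1}}T^{-1}dT$) \emph{and} $h$ lies in the critical window $h^2\asymp 2\log\log t$ (so that $(\log\log t)\,h^{-1}e^{-h^2/2}\asymp h\,e^{-h^2/2}$), and securing that window requires precisely the truncation argument your plan never invokes. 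Second, even with a correct subsequence, the lemmas control $\sum_n(\log\log t_n)\,t_n^{-1}P(A_n)$, a probability series damped by weights tending to zero; its convergence does not imply $\sum_nP(A_n)<\infty$ (take $t_n=n$ and $P(A_n)=(\log n)^{-2}$), so the first Borel--Cantelli lemma receives no input from the lemma chain, and the second would additionally need the independence you concede you do not have --- that dependence (overlapping samples, random $A(T)$) is the real difficulty, and the ``block increments plus maximal inequality'' sketch is exactly the part that would have to be proved. Note finally that in the paper Lemmas \ref{lemma3.1}--\ref{lemma3.2} are not ingredients of Theorem \ref{theorem3.1} at all: they serve Theorem \ref{theorem3.2}, the rate characterization, so your plan in effect runs Theorem \ref{theorem3.2} backwards, presupposing the dichotomy it is supposed to establish.

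The paper's actual proof avoids Borel--Cantelli and the independence issue entirely. It observes that the score $H_{A(T)}=\sum_{i=1}^{A(T)}h_1(u_i)-A(T)k_1'(\theta_0)$ is a square-integrable martingale, embeds it in a standard Brownian motion via the Skorokhod representation ($H_i=W(T_i)$), and combines the law of the iterated logarithm with Condition C1 and the estimates of Acharya (1999) to obtain the strong approximation \eqref{eqn:28}: for every $\varepsilon>0$, $\bigl|I^{1/2}(\hat{\theta}_T-\theta_0)-W(T)/\sqrt{T}\bigr|<\varepsilon(\log\log T)^{1/2}$ a.s.\ for all large $T$. Then, assuming without loss of generality the sandwich \eqref{eqn:29}, $(\log\log T)^{1/2}\leq h(T)\leq 2(\log\log T)^{1/2}$ (justified at the end by Lemma 2.3 of Jain, Jogdeo and Stout (1975)), the approximation error is at most $\varepsilon h(T)$, and Kolmogorov's integral test for Brownian motion (It\^{o}--McKean) applied to $h\mp\varepsilon h$ --- whose integral converges or diverges together with $I(h)$ --- delivers both the probability-zero and probability-one halves simultaneously. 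This is the key idea missing from your proposal: replace the estimator by a Brownian motion up to an error absorbable into $\varepsilon h(T)$, and then quote the known continuous-time dichotomy, so that no discretization, no Gaussian-tail comparison, and no independence argument is ever needed.
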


\begin{remark}
\begin{enumerate}
\item From Theorem \ref{theorem3.1}, it is observed that for a sequence $\{ t_n\}\uparrow \infty$, the function $h(t_n)$
belongs to the upper class or lower class of $I^{1/2}(\hat{\theta}_{t_n}-\theta_0)$ according as \\
$$
\sum\limits_{n=1}^{\infty}(t_n h(t_n))^{-1} exp(-h^2(t_n)/2)
$$
converges or diverges.

\item  By applying the Lemma \ref{lemma3.2}, it is observed that the two series \\
$$
\sum\limits_{n=1}^{\infty}(loglogt_n) (t_n h(t_n))^{-1} exp \{(-h^2(t_n)/2) (1+C/loglogt_n)\}
$$  \\
and
$$
\sum\limits_{n=1}^{\infty} (loglogt_n)(t_nh(t_n))^{-1} exp \{(-h^2(t_n)/2) \}
$$
converge or diverge simultaneously $if~ and ~ only ~if$ $h(t_n)$ belongs to the upper or lower class of $I^{1/2}(\hat{\theta}_{t_n}-\theta_0)$
respectively.
\end{enumerate}
\end{remark}

\begin{thm}\label{theorem3.2}
Under conditions of Lemma \ref{lemma3.1}, $h(T)$ belongs to the upper class or lower class of
$I^{1/2}(\hat{\theta}_T-\theta_0)$ if and only if
$$
\int\limits_{1}^{\infty}(loglogT) T^{-1}P\{I^{1/2}(\hat{\theta_{T}} - \theta_0) > h(T) \}dt
$$
is convergent or divergent respectively.
\end{thm}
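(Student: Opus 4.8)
The plan is to reduce the integral criterion to the series criteria already established in Lemmas \ref{lemma3.1} and \ref{lemma3.2} and then to invoke Theorem \ref{theorem3.1}. Write $\psi(T)=P\{I^{1/2}(\hat{\theta}_T-\theta_0)>h(T)\}$ and $f(T)=(\log\log T)\,T^{-1}\psi(T)$, so that the quantity in question is $\int_1^\infty f(T)\,dT$. First I would argue that this integral converges or diverges together with the series $\sum_n (\log\log t_n)\,t_n^{-1}\psi(t_n)$ obtained by sampling at $t_n=n$. The cleanest route is to replace $\psi(T)$ by the Gaussian tail: by the uniform (Berry--Esseen-type) bound \eqref{eqn:19}, $\psi(T)=(1-\Phi(h(T)))+O(\varepsilon^{1/2}(T))$ uniformly, and Condition C2 guarantees $\int_1^\infty (\log\log T)\,T^{-1}\varepsilon^{1/2}(T)\,dT<\infty$, so the error contributes a finite amount and may be discarded. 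Since $h$ increases to infinity, the main term $1-\Phi(h(T))$ is monotone, and the normal tail asymptotic $1-\Phi(x)\sim (x\sqrt{2\pi})^{-1}e^{-x^2/2}$ reduces the integral to $\int_1^\infty (\log\log T)(T h(T))^{-1}e^{-h^2(T)/2}\,dT$, whose integrand is positive and eventually decreasing.

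With a monotone integrand the integral test applies, so this last integral converges or diverges exactly when $\sum_n (\log\log t_n)(t_n h(t_n))^{-1}e^{-h^2(t_n)/2}$ does; equivalently, one may simply cite Lemma \ref{lemma3.1} to pass from $\sum_n (\log\log t_n)\,t_n^{-1}\psi(t_n)$ to this exponential series. Next I would strip off the slowly varying factor $\log\log t_n$: by Lemma \ref{lemma3.2}, applied with the exponent perturbation $C$ (and in particular with both a positive and a negative choice of $C$ to sandwich the $C=0$ case), the series carrying the $\log\log t_n$ factor converges or diverges together with $\sum_n (t_n h(t_n))^{-1}e^{-h^2(t_n)/2}$. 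Finally, the first item of the Remark following Theorem \ref{theorem3.1} identifies convergence (resp.\ divergence) of this last series with $h(t_n)$ being an upper- (resp.\ lower-) class function, while Theorem \ref{theorem3.1} itself guarantees that the dichotomy for the sampled sequence coincides with the continuous-time statement $P\{I^{1/2}(\hat{\theta}_T-\theta_0)>h(T)\ \text{i.o.}\}=0$ or $1$. Chaining these equivalences, and checking that ``upper class / convergence'' and ``lower class / divergence'' line up at every step, yields the asserted characterization.

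The main obstacle I anticipate is the first passage, from the integral in the statement to a series to which Lemmas \ref{lemma3.1} and \ref{lemma3.2} apply. The difficulty is that $\psi(T)$ is not manifestly monotone in $T$, since the law of $\hat{\theta}_T$ itself varies with $T$, so the integral test cannot be applied to $\psi$ directly. This is precisely where \eqref{eqn:19} and Condition C2 are essential: they let me substitute the monotone Gaussian tail for $\psi$ while certifying that the accumulated approximation error is summable and hence irrelevant to convergence. A secondary delicate point is the removal of the $\log\log t_n$ factor via Lemma \ref{lemma3.2}; morally this is harmless because near the boundary of convergence one has $h^2(T)\asymp 2\log\log T$, so $(\log\log T)/h^2(T)$ stays bounded and the extra factor cannot change the convergence behaviour, but the rigorous version genuinely needs the exponent-perturbation device of Lemma \ref{lemma3.2} rather than a crude term-by-term comparison.
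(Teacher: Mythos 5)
Your proposal is correct and follows essentially the same route as the paper: both reduce the integral criterion to series criteria and chain Lemma \ref{lemma3.1}, Lemma \ref{lemma3.2} (via the Remark), and Theorem \ref{theorem3.1} to pass between the probability series, the Gaussian-tail series with and without the $\log\log$ factor, and the upper/lower class dichotomy. The only difference is one of care, not of method: you make explicit the integral-to-series passage (substituting the monotone Gaussian tail via \eqref{eqn:19} and Condition C2 before applying the integral test) and the $\pm C$ sandwich in Lemma \ref{lemma3.2}, two steps the paper's proof glosses over.
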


\begin{proof}[Proof of Lemma \ref{lemma3.1}]
Consider a sequence $\{t_n\}\uparrow \infty$. Then for the sequence of MLE $\{\hat{\theta}_{t_n}\}$
$$\sup_x\bigg|P\{I^{1/2}(\hat{\theta}_{t_n}-\theta_0) \leq x \} - \Phi(x) \bigg| = O(\varepsilon^{1/2}(t_n))$$
i.e.
$$\sum\limits_{n=1}^{\infty} (loglogt_n)(t_n)^{-1} exp \{(-h^2(t_n)/2) \}
\bigg|P\{I^{1/2}(\hat{\theta}_{t_n}-\theta_0) > h(t_n) \} - \{1-\Phi(h(t_n)) \} \bigg| < \infty \quad \text{(using C2)}$$
Therefore,\\
$$\sum\limits_{n=1}^{\infty} (loglogt_n)(t_n)^{-1}
\bigg|P\{I^{1/2}(\hat{\theta}_{t_n}-\theta_0) > h(t_n) \} - C h^{-1}(t_n) exp(-h^2(t_n)/2) \bigg| < \infty$$,
where $C$ is some positive constant. Thus
$$
\sum \limits_{n=1}^{\infty}(loglogt_n) t_n^{-1}P\{I^{1/2}(\hat{\theta_{t_n}} - \theta_0) > h(t_n) \}
$$
and
$$
\sum\limits_{n=1}^{\infty}(loglogt_n) (t_n h(t_n))^{-1} exp (-h^2(t_n)/2)
$$
converge or diverge simultaneously.
\end{proof}

For proof of Lemma \ref{lemma3.2}  we refer to Davis (1969).

\begin{proof}[Proof of Theorem \ref{theorem3.1}]
From the log-likelihood equation given in equation
\eqref{eqn:14} we have
\begin{align}
  l'(\theta)& = \sum_{i=1}^{A(T)}h_1(u_i) - A(T)k_1^{'}(\theta) \nonumber \\
  l^{''}(\theta)& =A(T)k_1^{''}(\theta)=-A(T)\sigma_1^2(\theta) \nonumber.
\end{align}
From equation \eqref{eqn:17} we get
\begin{equation}\label{eqn:21rev1}
I^{1/2}(\hat{\theta}_T-\theta_0)=
\frac{l'(\theta_0)/I^{1/2}}{\{[l^{''}(\theta_0)-
l^{''}(\bar{\theta}_T)]-l^{''}(\theta_0)\}/I}
\end{equation}
It is easy to see that
$H_{A(T)}=\sum_{i=1}^{A(T)}h_1(u_i) - A(T)k_1^{'}(\theta_0)$
is a square integrable martingale with zero mean since $\{u_i, i\geq 1\}$
is an independent sequence of random variables.
Hence, by the Skorokhod representation (see Hall and Heyde
[10, appendix I, theorem A.1]), there exists a standard
Brownian motion $W(.)$ and a non-negative random variable
$T_i, ~1\leq i \leq A(T)$, such that
without loss of generality
$$
H_i=W(T_i),~ 1\leq i \leq A(T).
$$
Hence, by Theorem 2.3 of Feigin (1976) due to Kunita and Watenabe,
\begin{equation}\label{eqn:21rev2}
  \frac{H_i}{I^{1/2}}=W(T_i/I) \quad \text{for}~1 \leq i \leq A(T).
\end{equation}

Then from \eqref{eqn:21rev1} with \eqref{eqn:21rev2} we get that
\begin{align}\label{eqn:21}
   & \bigg| \frac{I^{1/2}(\hat{\theta}_T-\theta_0) - \frac{W(T)}{\sqrt{T}}}{(loglogT)^{1/2}}\bigg|   \nonumber  \\
   =& \bigg | \frac{\frac{W(T_{A(T)}) / I^{1/2}}{ \{[l^{''}(\theta_0) -  l^{''}(\bar{\theta}_T)] -l^{''}(\theta_0)\}/I }
-
\frac{W(T)}{\sqrt{T}}}{(loglogT)^{1/2}} \bigg | \nonumber \\
   =& \bigg| \frac{\frac{\sqrt{2} W(T_{A(T)})}{(2T_{A(T)} loglogT_{A(T)})^{1/2}}
\frac{(2T_{A(T)} loglogT_{A(T)})^{1/2}}{(2I loglog I)^{1/2}}
\frac{(2I loglog I)^{1/2} }{(2T loglog T)^{1/2}}
\frac{\sqrt{T}}{\sqrt{I}}}{\frac{l^{''}(\theta_0) -  l^{''}(\bar{\theta}_T)}{I}
-
\frac{l^{''}(\theta_0)}{I}}
-
\frac{\sqrt{2} W(T)}{(2TloglogT)^{1/2}} \bigg|
\end{align}
From the law of iterated logarithm for Brownian motion process
\begin{align}
   & \lim_{T \rightarrow \infty} \frac{W(T)}{(2TloglogT)^{1/2}} =1 \quad a.s. \label{eqn:22}\\
   & \lim_{T \rightarrow \infty} \frac{ W(T_{A(T)})}{(2T_{A(T)} loglogT_{A(T)})^{1/2}} = 1 \quad a.s. \quad
   \text{(cf. Kulinich (1985, P.564))} \\
   & \lim_{T \rightarrow \infty} \frac{(2T_{A(T)} loglogT_{A(T)})^{1/2}}{(2I loglog I)^{1/2}}=1 \\
  & \lim_{T \rightarrow \infty} \frac{(2I loglog I)^{1/2}}{(2T loglog T)^{1/2}} = \lim_{T \rightarrow \infty} \frac{\sqrt{I}}{\sqrt{T}}.
  \end{align}
Moreover,
\begin{align}
 P \bigg\{ \bigg|   \frac{l^{''}(\theta_0)}{I} + 1 \bigg| \geq \frac{\varepsilon(T)}{2}  \bigg \}
=& P \bigg\{ \bigg|   \frac{A(T) \sigma_1^2}{E(A(T)) \sigma_1^2} - 1 \bigg| \geq \frac{\varepsilon(T)}{2}   \bigg \}
=O(\varepsilon^{1/2}(T)) \\
 \quad \quad \quad P\bigg\{ \bigg|   \frac{l^{''}(\theta_0) - l^{''}(\bar{\theta}_T)}{I} \bigg| \geq \frac{\varepsilon(T)}{2}  \bigg \}
=& O(\varepsilon^{1/2}(T)) \quad \text{(cf. Acharya (1999, P.214)} \label{eqn:27}.
\end{align}
Now using \eqref{eqn:22} to \eqref{eqn:27} in \eqref{eqn:21},
\begin{align}
   & \lim_{T \rightarrow \infty} \bigg| \frac{I^{1/2}(\hat{\theta}_T-\theta_0) - \frac{W(T)}{\sqrt{T}}}{(loglogT)^{1/2}}\bigg|=|\sqrt{2} -\sqrt{2}|
  =0 \quad a.s.
\end{align}

The proof of the rest part of the theorem is a direct consequence of Theorem 5.1 of Jain, {\it et al} (1975).
However for completeness, we give a proof of it in short.\\
From \eqref{eqn:27}, for arbitrary $\varepsilon >0$, we have
\begin{equation}\label{eqn:28}
  \bigg| I^{1/2}(\hat{\theta}_T-\theta_0) - \frac{W(T)}{\sqrt{T}} \bigg|
  < \varepsilon (loglogT)^{1/2}  \quad a.s. \quad \text{as} \quad T \rightarrow \infty.
\end{equation}
Let us assume that for all $T$ sufficiently large,
\begin{equation}\label{eqn:29}
h_1(T) \leq h(T) \leq h_2(T),
\end{equation}
where $h_1(T)=(loglogT)^{1/2}$ and $h_2(T)=2(loglogT)^{1/2}$.

If $I(h(T)) < \infty$, then by Kolmogorov's test for Brownian motion motion ( It$\hat{o}$ and H.P. McKean (1974, P. 163)),
we have for any $\varepsilon > 0$,
\begin{equation}\label{eqn:30}
P\{ W(T) > T^{1/2}(h(T)-\varepsilon h(T)) ~ i.o. ~ \text{as} ~ T \rightarrow \infty \} =0,
\end{equation}
since $h(T)-\varepsilon h(T)$ increases as $h(T)$ increases and $I(h(T)-\varepsilon h(T)) < \infty$.

Now using \eqref{eqn:28} in conjunction with \eqref{eqn:29} and \eqref{eqn:30}, we obtain
$$
P\{ I^{1/2}(\hat{\theta}_{T}-\theta_0) > h(T) ~ i.o. ~ \text{as} ~ T \rightarrow \infty \} =0.
$$
On the other hand if $I(h(T))=\infty$, then $I(h(T)+\varepsilon h(T))=\infty$ for every $\varepsilon>0$.

Again $I(h(T)+\varepsilon h(T))=\infty$ is also increasing for sufficiently large $T$ and similar argument shows that
$$
P\{ I^{1/2}(\hat{\theta}_{T}-\theta_0) > h(T) ~ i.o. ~ \text{as} ~ T \rightarrow \infty \} =1.
$$

Now to complete the proof it is sufficient to show that \eqref{eqn:29} may be assumed without any loss of generality.\\
Let $h(T)$ be an arbitrary increasing function. Define
\begin{equation}\label{eqn:31}
  \hat{h}(T)=min[max\{ h(T), h_1(T)\}, h_2(T)].
\end{equation}
By Lemma 2.3 of Jain, {\it et al} (1975), $I(h(T))<\infty$ implies $I(\hat{h}(T))<\infty$ and
$\hat{h} \leq h$ near $\infty$.

Since $\hat{h}$ satisfies \eqref{eqn:29}, we conclude that
 $$
P\{ I^{1/2}(\hat{\theta}_{T}-\theta_0) > \hat{h}(T) ~ i.o. ~ \text{as} ~ T \rightarrow \infty \} =0.
$$

But because $\hat{h} \leq h$ near $\infty$, $I(h(T))<\infty$ implies that
$$
P\{ I^{1/2}(\hat{\theta}_{T}-\theta_0) > h(T) ~ i.o. ~ \text{as} ~ T \rightarrow \infty \} =0.
$$

Again let $I(h(T))=\infty$. Then by Lemma 2.3 of Jain, {\it et al} (1975), $I(\hat{h}(T))<\infty$ and
$$
P\{ I^{1/2}(\hat{\theta}_{T}-\theta_0) > \hat{h}(T) ~ i.o. ~ \text{as} ~ T \rightarrow \infty \} =1.
$$

Hence there exists a sequence $T_n \rightarrow \infty$ such that
\begin{equation}\label{eqn:32}
  I^{1/2}(\hat{\theta}_{T_n}-\theta_0) > \hat{h}(T_n) \quad a.s. \quad \text{for every positive integer} ~ n.
\end{equation}

Since $I(h_2(T)) < \infty$, we have
\begin{equation}\label{eqn:33}
  I^{1/2}(\hat{\theta}_{T_n}-\theta_0) \leq \hat{h}_2(T_n).
\end{equation}
Now from \eqref{eqn:32} and \eqref{eqn:33},
\begin{equation}\label{eqn:34}
  \hat{h}(T_n) < h_2(T_n) \quad \text{for large} ~n.
\end{equation}
Thus from the definition of $\hat{h}$, the inequality \eqref{eqn:34} implies that $h(T_n) \leq \hat{h}(T_n)$ for large $n$
and hence the proof is completed.
\end{proof}

\begin{proof}[Proof of Theorem \ref{theorem3.2}]
Let $h(T)$ belongs to the upper class of $ I^{1/2}(\hat{\theta}_T - \theta_0)$.
Then
$$
\int\limits_{1}^{\infty}(h(T)/T) exp\{ -h^2(T)/2 \} dT < \infty,
$$
i.e. for a sequence $\{ T_n\} \rightarrow \infty$ as $n \rightarrow \infty$,
$$
\sum\limits_{n=1}^{\infty} \frac{h(T_n)}{T_n}) exp\{ -h^2(T_n)/2 \} < \infty,
$$
Therefore, by  Lemma \ref{lemma3.2} and  Remark 2
$$
\sum\limits_{n=1}^{\infty}(loglogT_n) (T_n h(T_n))^{-1} exp \{(-h^2(T_n)/2) \} < \infty
$$
and hence by Lemma \ref{lemma3.1},
$$
\sum \limits_{n=1}^{\infty}(loglogT_n) T_n^{-1}P\{I^{1/2}(\hat{\theta_{T_n}} - \theta_0) > h(T_n) \} < \infty
$$
i.e. $$
\int\limits_{1}^{\infty}(loglogT) T^{-1}P\{I^{1/2}(\hat{\theta_{T}} - \theta_0) > h(T) \}dT < \infty.
$$
To prove sufficiency let
$$
\int\limits_{1}^{\infty}(loglogT) T^{-1}P\{I^{1/2}(\hat{\theta_{T}} - \theta_0) > h(T) \}dT < \infty
$$
i.e. for some sequence $\{T_n\} \uparrow \infty$ as $n \rightarrow \infty$,
$$
\sum \limits_{n=1}^{\infty}(loglogT_n) T_n^{-1}P\{I^{1/2}(\hat{\theta_{T_n}} - \theta_0) > h(T_n) \} < \infty.
$$
Then by Lemma \ref{lemma3.1}
$$
\sum\limits_{n=1}^{\infty}(loglogT_n) (T_n h(T_n))^{-1} exp \{(-h^2(T_n)/2) \} < \infty.
$$
Now by using Remark 2 and the Lemma \ref{lemma3.2} we have
$$
\sum\limits_{n=1}^{\infty} \frac{h(T_n)}{T_n}) exp\{ -h^2(T_n)/2 \} < \infty,
$$
i.e. $$
\int\limits_{1}^{\infty}(h(T)/T) exp\{ -h^2(T)/2 \} dT < \infty,
$$
and hence $h(T)$ belongs to the upper class of $I^{1/2}(\hat{\theta}_T - \theta_0)$.\\
Replacing the convergence statement by divergence in the above proof the result for the lower class is obtained.
\end{proof}

\section{Example}\label{sec:4}
Let us consider the above result for an $M/M/1$ queueing system. Here
$$
f(u, \theta)=\theta e^{-\theta u} \quad \text{and} \quad g(v, \phi)=\phi e^{-\phi v},
$$
so that the loglikelihood function becomes
\begin{align}\label{ex:1}
  l(\theta, \phi) & = log\bigg[ \theta^{A(T)} exp \bigg( -\sum_{i=1}^{A(T) } \theta u_i \bigg) \phi^{D(T)} exp\bigg( -\sum_{i=1}^{D(T)} \phi v_i  \bigg)  \bigg] \\
   & = A(T) log\theta - \sum_{i=1}^{A(T)} \theta u_i + D(T) log\phi - \sum_{i=1}^{D(T)} \phi v_i.
\end{align}
We verify condition C1 as in Acharya (1999). Condition C2 is verified taking $\varepsilon(t)=t^{-\frac{2}{5}}$.
Hence the results of section \ref{sec:3} can be used for this model.

\section{Acknowledgment}
The authors would like to thank the anonymous referees for their helpful comments in improving
the presentation of the paper.

Sarat Kumar Acharya\\
Retd. Professor\\
C-6, Varun Residency\\
Pradhan Para\\
Budharaja\\
Sambalpur-768004\\
Odisha\\
INDIA\\
Email Id.: acharya\_sarat@yahoo.co.in.
\\
\\
Saroja Kumar Singh\\
P. G. Dept. of Statistics\\
Sambalpur University\\
Jyotivihar-768019\\
Sambalpur\\
Odisha\\
INDIA\\
Email Id.: sarojasngh@gmail.com

\end{document}